\numberwithin{equation}{section}
\newtheorem{theorem}{Theorem}
\newtheorem{remark}{Remark}
\begin{document}
 \title[Curves with decomposable normal vector bundles]{Curves with decomposable normal vector bundles and automorphism groups}
 \author{Tuyen Trung Truong}
    \address{School of Mathematics, Korea Institute for Advanced Study, Seoul 130-722, Republic of Korea}
 \email{truong@kias.re.kr}
\thanks{}
    \date{\today}
    \keywords{Automorphisms, Blowup, Positive entropy}
    \subjclass[2010]{37F, 14D, 32U40, 32H50}
    \begin{abstract}If a smooth projective threefold $X$ satisfies a certain Property A (see below for definition), then any automorphism of $X$ has zero entropy. Let  $Y$ be a smooth projective threefold satisfying Property A. Let $\pi :X\rightarrow Y$ be a blowup at either a point or at a smooth curve $C\subset Y$ with the following two properties: i) $c_1(Y).C$ is an odd number, and ii) the normal vector bundle $N_{C/Y} $ is decomposable. Then we show that $X$ also satisfies Property A. 

As a further application of Property A we prove the following result. Let $X_1$ be the blowup of $X_0=\mathbb{P}^3$ at a finite number of points, and let $X=X_2$ be the blowup of $X_1$ at a finite number of pairwise disjoint smooth curves (here the images of these curves in $X_0$ may intersect). Then any automorphism of $X$ has the same first and second dynamical degrees. Under some further conditions, then any automorphism of $X$ has zero entropy. The result is also valid for threefolds $X_0$ satisfying a certain condition on the second Chern class. Some explicit examples are given.
\end{abstract}
\maketitle
\section{Introduction}
It is very difficult to find automorphisms of positive entropy on a smooth rational threefold. In fact, the following question (asked in 2011) by E. Bedford: "Is there a projective threefold which is a finite composition of blowups at points or smooth curves starting from $\mathbb{P}^3$ and which has an automorphism of positive entropy?" still has no answer.  Even for the larger class of smooth rational threefolds, there are currently only two known examples of manifolds with primitive automorphisms of positive entropy (see \cite{oguiso-truong, catanese-oguiso-truong, colliot-thelene}). Here  a primitive automorphism, defined by D.-Q. Zhang \cite{zhang}, is one that has no non-trivial invariant fibrations.   

In \cite{truong}, we gave many evidences to that the answer to Bedford's question is No. The work in \cite{truong} has been generalized to higher dimensions in \cite{bayraktar-cantat} and \cite{truong1}.   We note that in contrast, there are such blowups $X$ with interesting pseudo-automorphisms which are primitive (see \cite{bedford-kim, bedford-cantat-kim}). 

Let $X$ be a smooth projective threefold. We denote $H^{1,1}(X,\mathbb{R})=H^2(X,\mathbb{R})\cap H^{1,1}(X)$. A cohomology class $\zeta \in H^{1,1}(X,\mathbb{R})$ is nef if $\zeta $ is the limit of a sequence of K\"ahler forms on $X$. Let $K_X\in H^{1,1}(X,\mathbb{R})$ denote the canonical class of $X$, and $c_j(X)$ is the $j$-th Chern class of $X$. In \cite{truong}, we used that a threefold has no automorphism of positive entropy as long as it satisfies the following property.

{\bf Property A1.} A smooth projective threefold $X$ satisfies Property A1 if  whenever $\zeta $ is a nef cohomology class on $X$ such that $\zeta ^2=0$, then $\zeta \in \mathbb{R}.H^2(X,\mathbb{Q})$. 

{\bf (Non-) Example 1. } However, it can be checked that there are some finite composition of smooth blowups $X\rightarrow \mathbb{P}^3$ starting from $\mathbb{P}^3$ for which Property A1 above is violated. For an explicit example we can proceed as follows. There is a finite composition of smooth blowups $Z\rightarrow \mathbb{P}^2\times \mathbb{P}^1$ starting from $\mathbb{P}^2\times \mathbb{P}^1$ with an automorphism $f$ of positive entropy (these can be constructed from automorphisms of positive entropy on some finite composition of smooth blowups starting from $\mathbb{P}^2$, for example those given in \cite{mcmullen} and \cite{bedford-kim1}). Then, there is a  non-zero nef $\xi $ such that $\xi ^2 =0$  and $\xi \notin \mathbb{R}.H^2(Z,\mathbb{Q})$. Since $\mathbb{P}^3$ and $Z$ are birationally equivalent, by Hironaka's resolution of singularities there is a finite composition of smooth blowups $X\rightarrow \mathbb{P}^3$ starting from $\mathbb{P}^3$, which has a surjective birational morphism $\pi :X\rightarrow Z$. Then $\zeta =\pi ^*(\xi )$ is nef on $X$, $\zeta ^2=0$ , but $\zeta\notin \mathbb{R}.H^2(X,\mathbb{Q})$.

Therefore, we see that Property A1 can not be used to check for a general finite composition of smooth blowups starting from $\mathbb{P}^3$. We note that if $f:X\rightarrow X$ is a holomorphic automorphism and $T_X$ is the holomorphic tangent bundle of $X$, then the differential map $f_*$ defines an isomorphism between $T_X$ and $f^*T_X$. In particular, the Chern classes of $X$ are preserved by  $f$.  Based on this, we propose an improved condition. 

{\bf Property A.} A smooth projective threefold $X$ satisfies Property A if  whenever $\zeta $ is a nef cohomology class on $X$ such that $\zeta ^2=0$, $\zeta .c_1(X)^2\geq 0$ and $\zeta .c_2(X)\leq 0$, then $\zeta \in \mathbb{R}.H^2(X,\mathbb{Q})$. 

It can be shown similarly to \cite{truong} that if a smooth projective threefold $X$ satisfies Property A then any automorphism on $X$ has zero entropy. If a smooth projective threefold $X$ satisfies Property A1, then obviously it satisfies Property A. Return to the example above, if $\xi$ on $Z$ is nef such that $\xi ^2=0$, $\xi .c_1(Z)^2\geq 0$ and $\xi .c_2(Z)\leq 0$, then $\zeta =\pi ^*(\xi )$ on $X$ is still nef and satisfies $\zeta ^2=0$. However, the conditions $\zeta .c_1(X)^2\geq 0$ and $\zeta .c_2(X)\leq 0$ are not guaranteed. 

It is more natural that in Property A, instead of the conditions $\zeta .c_1(X)^2\geq 0$ and $\zeta .c_2(X)\leq 0$ we should put the stronger conditions $\zeta .c_1(X)^2=0$ and $\zeta .c_2(X)=0$. However, the conditions $\zeta .c_1(X)^2\geq 0$ and $\zeta .c_2(X)\leq 0$ behave well under a blowdown, which is good for inductive arguments (see part 1) of the proof of Theorem \ref{Theorem1} below), while this is not the case for the conditions $\zeta .c_1(X)^2=0$ and $\zeta .c_2(X)=0$.
 
 Here is the first main result of this paper. 

\begin{theorem}
Let $Y$ be a smooth projective threefold satisfying Property A. Let $\pi :X\rightarrow Y$ be a blowup at a point, or at a smooth curve $C$ satisfying the following two conditions: 

i) $c_1(Y).C$ is an odd number, 

and 

ii)  The normal vector bundle $N_{C/Y}$ is decomposable, i.e. it is a direct sum of two line bundles over $C$. 

Then $X$ also satisfies Property A. 

\label{Theorem1}\end{theorem}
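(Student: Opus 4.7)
The plan is to reduce Property A for $X$ to Property A for $Y$ by pushing a test class down via $\pi$ and tracking the correction from the exceptional divisor. Let $\zeta\in H^{1,1}(X,\mathbb{R})$ be nef with $\zeta^2=0$, $\zeta.c_1(X)^2\geq 0$ and $\zeta.c_2(X)\leq 0$; I want to show $\zeta\in\mathbb{R}.H^2(X,\mathbb{Q})$. Using the splitting $H^2(X,\mathbb{R})=\pi^*H^2(Y,\mathbb{R})\oplus\mathbb{R}.E$ I would write $\zeta=\pi^*\xi+tE$. The goal then splits into showing, first, that $\xi$ satisfies the hypotheses of Property A on $Y$, and second, that the scalar $t$ is rational in comparison to $\xi$. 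The point blowup case is similar and strictly easier; I concentrate on the smooth curve case.

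Expanding $\zeta^2=0$ in $H^4(X,\mathbb{R})=\pi^*H^4(Y,\mathbb{R})\oplus\mathbb{R}.\ell$ using the standard blowup intersection formulas ($E^3=\deg N_{C/Y}$, $E^2.\pi^*\alpha=-\alpha.C$, $\pi^*\alpha.E=(\alpha.C)\ell$, where $\ell$ is the class of a fiber of $\pi|_E$), I obtain two scalar relations: $\xi^2=t^2[C]$ in $H^4(Y,\mathbb{R})$ and $t(2\xi.C - t\deg N_{C/Y})=0$. The adjunction formula gives $\deg N_{C/Y}=c_1(Y).C+2g-2$, which is odd by hypothesis i) and in particular nonzero. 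Therefore either $t=0$, or $t=2(\xi.C)/\deg N_{C/Y}$, in which case $t$ is a fixed rational multiple of $\xi.C$.

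Next I would expand $\zeta.c_1(X)^2$ and $\zeta.c_2(X)$ using $c_1(X)=\pi^*c_1(Y)-E$ and the standard blowup formula for $c_2(X)$, whose correction is supported on $E$ and decomposes into a $\pi^*[C]$-part and an $\ell$-part with coefficients depending on $c_1(Y).C$ and the genus $g$ of $C$. After substituting the two relations coming from $\zeta^2=0$, the inequalities $\zeta.c_1(X)^2\geq 0$ and $\zeta.c_2(X)\leq 0$ should descend to $\xi.c_1(Y)^2\geq 0$ and $\xi.c_2(Y)\leq 0$ on $Y$. Hypothesis ii) enters here: the nef class $\zeta|_E$ lives on the ruled surface $E=\mathbb{P}(L_1\oplus L_2)$, whose nef cone is explicitly spanned by the fiber class $f$ and the ``positive'' section $\sigma$ coming from the decomposition. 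Expanding $\zeta|_E=(\xi.C)f - t\,c_1(\mathcal{O}_E(1))$ in the basis $\{\sigma,f\}$ and requiring the coefficients to be nonnegative pins down the signs of $\xi.C$ and $t$ that make the reductions above work.

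With $\xi$ satisfying the hypotheses of Property A on $Y$, apply Property A to obtain $\xi=c\eta$ for some $c\in\mathbb{R}$ and $\eta\in H^2(Y,\mathbb{Q})$. If $t=0$ then $\zeta=c\pi^*\eta\in\mathbb{R}.H^2(X,\mathbb{Q})$ directly; if $t\neq 0$ then $t/c=2(\eta.C)/\deg N_{C/Y}$ is rational (using once more that $\deg N_{C/Y}$ is a nonzero integer by hypothesis i)), so $\zeta=c(\pi^*\eta+(t/c)E)\in\mathbb{R}.H^2(X,\mathbb{Q})$. The main obstacle I anticipate is the third paragraph: after substituting $t=2(\xi.C)/\deg N_{C/Y}$ into the expansions of $\zeta.c_1(X)^2$ and $\zeta.c_2(X)$, the resulting expressions mix terms in $\xi.C$, $\xi.c_1(Y)^2$, $\xi.c_2(Y)$, $c_1(Y).C$, $g$, and $\deg N_{C/Y}$, and disentangling these into the clean inequalities required by Property A on $Y$ is the technical heart of the argument, requiring hypothesis i) to control the arithmetic of the denominators and hypothesis ii) to control the geometry of $\zeta|_E$ simultaneously.
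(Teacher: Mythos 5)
Your setup --- writing $\zeta=\pi^*\xi+tE$, extracting two scalar relations from $\zeta^2=0$, and looking at the nef cone of the decomposable ruled surface $E$ --- is essentially the paper's, and your handling of the case $t=0$ matches its Subcase 2.1. But there is a genuine gap in the case $t\neq 0$, and it is not merely the ``technical heart'' you defer: your strategy of descending the inequalities to $Y$ and then applying Property A to $\xi$ cannot work there. Your own first relation says $\xi^2=t^2[C]$ in $H^4(Y,\mathbb{R})$, and $[C]\neq 0$ because $C$ is an effective curve in a projective threefold; so when $t\neq 0$ the class $\xi$ violates the hypothesis $\xi^2=0$ of Property A, and there is nothing to apply. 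The case $t\neq 0$ must be excluded outright, not reduced to $Y$.

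That exclusion is where hypotheses i) and ii) really enter, and differently from how you use them. You invoke i) only to conclude $\deg N_{C/Y}\neq 0$ so that $t$ is determined; the paper instead normalizes the bundle, setting $\mathcal{E}=N_{C/Y}\otimes\mathcal{M}$, and works with the section $C_0\subset E$ satisfying $C_0^2=c_1(\mathcal{E})$. Decomposability (hypothesis ii)) gives $c_1(\mathcal{E})\leq 0$ by Hartshorne V.2.12(a), and oddness of $c_1(Y).C$ gives that $c_1(\mathcal{E})=c_1(Y).C+2g-2+2c_1(\mathcal{M})$ is odd, hence $c_1(\mathcal{E})<0$ strictly; note that oddness of $\deg N_{C/Y}$ alone does not yield this, because of the even shift $2c_1(\mathcal{M})$. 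Then, writing $t=-\alpha$ with $\alpha>0$ and $\gamma=\deg N_{C/Y}$, your second relation gives $\xi.C=\alpha\gamma/2$, and a direct computation with $C_0=-F.F+\frac{1}{2}(C_0^2+\gamma)f$ yields $\zeta.C_0=\alpha\, c_1(\mathcal{E})/2<0$, contradicting nefness of $\zeta$ against the effective curve $C_0$. Your observation that the nef cone of $\mathbb{P}(L_1\oplus L_2)$ is spanned by the fiber and a distinguished section is exactly the right ingredient --- the inequality $\zeta.C_0\geq 0$ is one of the two walls of that cone --- but you deploy it to ``pin down signs'' for a descent that cannot succeed, rather than to produce the contradiction that closes the case. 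To repair the argument, replace your third and fourth paragraphs, in the case $t\neq 0$, by this contradiction.
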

 \begin{remark}We note that the condition ii) in Theorem \ref{Theorem1} may be easily satisfied. For example, if $C$ is a smooth rational curve in $Y$, then even if $C$ does not move in $Y$, its normal vector bundle in $Y$ is still decomposable, by a result of Grothendieck. 
\label{Remark1}\end{remark}

Our next main result is a further application of Property A. It roughly says that for a blowup $X$ of $\mathbb{P}^3$ at a finite number of curves in $\mathbb{P}^3$ which may intersect each other, any automorphism of $X$ has the same first and second dynamical degrees. If some additional assumptions are imposed, then any automorphism of $X$ has zero entropy.  The result is also valid for more general $X_0$. The precise statement  will be stated after we recall some basic notions.  

If $f:X\rightarrow X$ is an automorphism of a smooth projective threefold, then the first dynamical degree $\lambda _1(f)$ is defined as the largest eigenvalue of $f^*:H^{1,1}(X)\rightarrow H^{1,1}(X)$.  We then define $\lambda _2(f):=\lambda _1(f^{-1})$. By Gromov-Yomdin's theorem, the entropy of $f$ equals $\log \max \{\lambda _1(f),\lambda _2(f)\}$. These dynamical degrees satisfy a log-concavity property: $\lambda _1(f)^2\geq \lambda _2(f)$. We note that if $f$ preserves a fibration over a curve or a surface, then its first and second dynamical degrees are the same.

Let $H^{1,1}_{alg}(X,\mathbb{Q})\subset H^{1,1}(X,\mathbb{Q})$ denote the subvector space generated by the classes of divisors in $X$. We define $H^{1,1}_{alg}(X,\mathbb{R})=\mathbb{R}\otimes _{\mathbb{Z}}H^{1,1}_{alg}(X,\mathbb{Q})$. Then $\lambda _1(f)$ is the same as the largest eigenvalue of $f^*:H^{1,1}_{alg}(X)\rightarrow H^{1,1}_{lag}(X)$. An element $\zeta \in H^{1,1}_{alg}(X,\mathbb{R})$ is nef if it is the limit of ample divisors with real coefficients. An element $\zeta\in H^{1,1}_{alg}(X,\mathbb{R})$ is movable if there is a blowup $\pi :Z\rightarrow X$ such that $\zeta$ is the pushforward of some nef class on $Z$. 

\begin{theorem}
Let $X_0$ be a smooth projective threefold such that $c_2(X_0).\zeta >0$ for all non-zero movable $\zeta \in H^{1,1}_{alg}(X_0,\mathbb{R})$. Let $X_1\rightarrow X_0$ be the blowup at a finite number of points in $X_0$. Let $D_1,\ldots ,D_m\subset X_1$ be pairwise disjoint smooth curves, and $X=X_2\rightarrow X_1$ the blowup at these curves. Let $f$ be an automorphism of $X$. Then  

1) $\lambda _1(f)=\lambda _2(f)$.

2) Assume moreover that for any $j$, then $c_1(X_1).D_j\leq 2g_j-2$, where $g_j$ is the genus of $D_j$. Then any automorphism of $X$ has zero entropy.
\label{Theorem2}\end{theorem}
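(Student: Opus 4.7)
For Part 1, I would argue by contradiction: suppose $\lambda_1(f)>\lambda_2(f)$ for some automorphism $f$ of $X$. The standard Perron--Frobenius theory on the nef cone of a projective threefold produces a non-zero nef class $\zeta\in H^{1,1}(X,\mathbb{R})$ with $f^*\zeta=\lambda_1(f)\zeta$ and $\zeta^2=0$. Since $f$ is an automorphism it fixes every Chern class, so applying $f^*$ to the intersection numbers $c_i(X)\cdot\zeta$ and using $\lambda_1(f)>1$ forces
\[ c_1(X)^2\cdot\zeta \;=\; 0, \qquad c_2(X)\cdot\zeta \;=\; 0. \]

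Let $\pi:X\to X_0$ denote the composition of the two blowup maps, and set $\zeta_0:=\pi_*\zeta$; by definition $\zeta_0\in H^{1,1}_{alg}(X_0,\mathbb{R})$ is movable. Using the standard blowup formulas $c_1(X_1)=\pi_1^*c_1(X_0)-2\sum_i E_i$ and $c_1(X)=\pi_2^*c_1(X_1)-\sum_j F_j$, together with the projection formula and the fact that the correction $\pi_{2*}c_2(X)-c_2(X_1)$ is a linear combination of the curve classes $[D_j]$ (which push further down to effective curves $\pi_1(D_j)$ in $X_0$), I would expand
\[ 0 \;=\; c_2(X)\cdot\zeta \;=\; c_2(X_0)\cdot\zeta_0 + R, \]
where $R$ is a sum of intersection numbers of (pullbacks of) $\zeta$ with classes in the exceptional locus. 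Each term of $R$ turns out to be non-negative: BDPP duality gives (movable)$\cdot$(pseudo-effective curve) $\geq 0$, and the remaining terms are intersections of the nef class $\zeta$ with curves lying in exceptional divisors. Hence $c_2(X_0)\cdot\zeta_0\leq 0$, and the hypothesis on $X_0$ forces $\zeta_0=0$. Consequently $\zeta=\sum_i a_i\widetilde{E}_i+\sum_j b_j F_j$ is supported on the exceptional locus of $\pi$, where $\widetilde{E}_i$ is the strict transform of $E_i$.

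To complete Part 1, I combine $\zeta$ nef, $\zeta^2=0$ and $c_1(X)^2\cdot\zeta=0$ into a finite system of linear and quadratic relations on the coefficients $a_i,b_j$, using the intersection theory on $\mathbb{P}^2$ (for the $E_i$) and on the $\mathbb{P}^1$-bundles $F_j\to D_j$. The only solution should be $\zeta=0$, contradicting $\zeta\neq 0$. This combinatorial finish is the main technical obstacle: because the curves $D_j$ may meet the points blown up in $X_0$, the $\widetilde{E}_i$ and $F_j$ intersect nontrivially, and these intersections must be tracked case by case.

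For Part 2, Part 1 already gives $\lambda_1(f)=\lambda_2(f)$, so it suffices to rule out this common value being greater than $1$. The argument above again produces a non-zero nef $\zeta$ supported on the exceptional locus and satisfying all the Chern-class vanishings. The additional hypothesis $c_1(X_1)\cdot D_j\leq 2g_j-2$ is equivalent by adjunction to an upper bound on $\deg N_{D_j/X_1}$, which puts the relevant numerical invariants of the ruled surface $F_j$ in the correct sign to force $b_j=0$ in the combinatorial step; the $a_i$ are then handled as in the point-blowup case of Theorem~\ref{Theorem1}. The net result is $\zeta=0$, the desired contradiction. The essential role of the hypothesis on $c_2(X_0)$, and in Part 2 of the extra bound on $c_1(X_1)\cdot D_j$, is to give strict sign control in the presence of arbitrary incidences between the blown-up curves and points.
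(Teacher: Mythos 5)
Your overall strategy (extract a nef eigenclass $\zeta$ with $\zeta^2=0$ and vanishing Chern-class intersections, push down to $X_0$, and invoke the positivity hypothesis on $c_2(X_0)$) matches the paper's, but the proposal has a genuine gap exactly at the step you gloss over. Writing $\zeta=\pi_2^*(\xi)-\sum_j\alpha_jF_j$, the correction term in $0\geq\zeta.c_2(X)$ is $R=\sum_j\bigl(\xi.D_j-\alpha_j\,c_1(X_1).D_j\bigr)$; the summand $-\alpha_j\,c_1(X_1).D_j$ comes from $(-\alpha_jF_j).(-\pi_2^*c_1(X_1).F_j)$ and is negative whenever $\alpha_j>0$ and $c_1(X_1).D_j>0$, so neither BDPP duality nor nefness of $\zeta$ makes each term of $R$ non-negative. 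The paper's proof hinges on a local computation you omit: intersecting the relations $\zeta^2=0$ and $\zeta.c_1(X)=0$ with each $F_j$ (the $D_j$ being disjoint) yields the dichotomy ``$\alpha_j=0$ or $\xi.D_j=\alpha_j\,c_1(X_1).D_j$'', and this is what forces every summand of $R$ to be $\geq 0$. Relatedly, your ``combinatorial finish'' after $\pi_*\zeta=0$ is left undone and flagged as the main obstacle, whereas in the paper the conclusion $\xi=0$, hence $\zeta=0$, is immediate once $(\pi_1)_*\xi=0$.

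In Part 2 there is a second, more structural problem. When $\lambda_1(f)=\lambda_2(f)=\lambda>1$, the eigenclass $\zeta$ satisfies $\zeta.c_1(X)^2=\zeta.c_2(X)=0$ but \emph{not} necessarily $\zeta.c_1(X)=0$, since $\lambda$ can be an eigenvalue of $f^*$ on $H^{2,2}$; so ``all the Chern-class vanishings'' of Part 1 are not available and the dichotomy above cannot be derived the same way. This is precisely where the hypothesis $c_1(X_1).D_j\leq 2g_j-2$ enters in the paper: using only $\zeta^2=0$ one gets $\xi.D_j=\tfrac{\alpha_j}{2}\bigl(c_1(X_1).D_j+2g_j-2\bigr)$, whence $\xi.D_j-\alpha_j c_1(X_1).D_j=\tfrac{\alpha_j}{2}\bigl(2g_j-2-c_1(X_1).D_j\bigr)\geq 0$ by the hypothesis. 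Your proposal instead assigns this hypothesis the role of forcing $b_j=0$ in a combinatorial step, which is not where it is needed; without the corrected accounting the push-down inequality $c_2(X_0).\zeta_0\leq 0$ does not follow.
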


To prove part 1) of the theorem, we use the following analog of Condition A: If $\zeta \in H^{1,1}_{alg}(X)$ is nef and is not contained in $\mathbb{R}.H^{1,1}_{alg}(X,\mathbb{Q})$, then either $\zeta ^2\not=0$ or $\zeta .c_1(X)\not= 0$ or $\zeta .c_2(X)\not= 0$. The only difference is that here we require a weaker condition $\zeta .c_1(X)\not= 0$, while in Condition A we require a stronger one $\zeta .c_1(X)^2\not= 0$. 

\begin{remark}Let $X_0$ be a smooth projective threefold which is a complete intersection in $\mathbb{P}^n$, where $n\geq 4$. That is, $X_0$ is the intersection of $n-3$ hypersurfaces $V_1,\ldots ,V_{n-3}$. We now show that if $\zeta $ is a non-zero movable class on $X_0$, then $\zeta .c_2(X_0)>0$.

Let $d_1,\ldots ,d_{n-3}$ be the degrees of $V_1,\ldots ,V_{n-3}$.  Let $h$ be the class of a hyperplane on $X$. The Chern classes of the normal bundle $N_{X_0/\mathbb{P}^n}$ is given by the formula
\begin{eqnarray*}
c(N_{X_0/\mathbb{P}^n})=\prod _{j=1}^{n-3}(1+d_jh).
\end{eqnarray*}
In particular,
\begin{eqnarray*}
c_1(N_{X_0/\mathbb{P}^n})&=&(\sum _{j}d_j)h,\\
c_2(N_{X_0/\mathbb{P}^n})&=&(\sum _{i<j}d_id_j)h^2.
\end{eqnarray*}
From the exact sequence
\begin{eqnarray*}
0\rightarrow T_{X_0}\rightarrow T_{\mathbb{P}^4}|_{X_0}\rightarrow N_{X_0/\mathbb{P}^3}\rightarrow 0,
\end{eqnarray*}
and the splitting principle for Chern classes, it follows that
\begin{eqnarray*}
c_1(X_0)&=&c_1(\mathbb{P}^n)|_{X_0}-c_1(N_{X_0/\mathbb{P}^n})= ((n+1)-\sum _jd_j)h,\\
c_2(X_0)&=&c_2(\mathbb{P}^n)|_{X_0}-c_2(N_{X_0/\mathbb{P}^n})-c_1(X_0)c_1(N_{X_0/\mathbb{P}^n})\\
&=&(\frac{(n+1)n}{2}-\sum _{i<j}d_id_j-(n+1)\sum _{j}d_j+(\sum _{j}d_j)^2)h^2.
\end{eqnarray*}
We have
\begin{eqnarray*}
&&\frac{(n+1)n}{2}-\sum _{i<j}d_id_j-(n+1)\sum _{j}d_j+(\sum _{j}d_j)^2\\
&=&[\frac{n-4}{2(n-3)}(\sum _jd_j)^2-\sum _{i<j}d_id_j]+[\frac{n(n+1)}{2}+\frac{n-2}{2(n-3)}(\sum _jd_j)^2-(n+1)\sum _jd_j].
\end{eqnarray*}
By Cauchy-Schwarz inequality, the first bracket on the right hand side of the above expression is non-negative. We now show that the second bracket is positive. We define $x=\sum _jd_j$. Then $x$ is a positive integer which is $\geq n-3$, and the second bracket is quadratic in $x$:
\begin{eqnarray*}
\frac{n(n+1)}{2}+\frac{n-2}{2(n-3)}(\sum _jd_j)^2-(n+1)\sum _jd_j=\frac{n(n+1)}{2}-(n+1)x+\frac{(n-2)}{2(n-3)}x^2=: g(x).
\end{eqnarray*}
The critical point of $g$ is $x_0=(n+1)(n-3)/(n-2)<n$. Hence, to show that $g(x)>0$ for all positive integer $x\geq n-3$, it suffices to show that $g(n-3),g(n-2),g(n-1),g(n)>0$ for any positive integer $n\geq 4$. We now check this latter claim. 

For $x=n-3$
\begin{eqnarray*}
g(n-3)=6>0.
\end{eqnarray*}
(Note that in this case all $d_j$ are $1$ and $X_0$ is not different than $\mathbb{P}^3$.)

For $x=n-2$, using that $(n-2)^2> (n-1)(n-3)$, we obtain
\begin{eqnarray*}
g(n-2)>\frac{n(n+1)}{2}-(n^2-n-2)+\frac{(n-2)(n-1)}{2}=3>0.
\end{eqnarray*}

For $x=n-1$, we have
\begin{eqnarray*}
g(n-1)=\frac{2(n-2)}{(n-3)}>0.
\end{eqnarray*}

For $x=n$, we have
\begin{eqnarray*}
g(n)=\frac{1}{(n-3)}>0.
\end{eqnarray*}

 A movable class is in particular  psef, i.e. can be represented by a positive closed current. Hence,  if $\zeta$ is a non-zero movable class on $X_0$ then $\xi .c_2(X_0)>0$.  Hence, Theorem \ref{Theorem2} can be applied for such a $X_0$. 
\end{remark}

The last main result in this paper considers some cases not covered in Theorem \ref{Theorem2} and is specific for the case where $X_0=\mathbb{P}^3$.
\begin{theorem}

1) Let $p_1,\ldots ,p_n$ be distinct points in $X_0=\mathbb{P}^3$, any $4$ of them do not belong to the same plane.  Let $C_{i,j}\subset \mathbb{P}^3$ be the line connecting $p_i$ and $p_j$. Let $\pi _1:X_1\rightarrow X_0$ be the blowup at $p_1,\ldots ,p_n$, and let $D_{i,j}\subset X_1$ be the strict transforms of $C_{i,j}$. Let $\pi _2:X=X_2\rightarrow X_1$ be the blowup of $X_1$ at all curves  $D_{i,j}$. Then any automorphism of $X$ has zero entropy.    
\label{Theorem3}\end{theorem}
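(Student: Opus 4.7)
The plan is to combine Theorem~\ref{Theorem2}(1) with a direct verification of Property~A on $X$. First, since $H^{1,1}_{alg}(\mathbb{P}^3,\mathbb{R})=\mathbb{R}H$ and $c_2(\mathbb{P}^3).H=6>0$, every non-zero movable class on $X_0=\mathbb{P}^3$ pairs positively with $c_2(X_0)$. Theorem~\ref{Theorem2}(1) then yields $\lambda_1(f)=\lambda_2(f)$ for every automorphism $f$ of $X$, so the proof reduces to showing this common value equals $1$, for instance by establishing Property~A on $X$.

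The natural inductive route through Theorem~\ref{Theorem1} is blocked. Writing $c_1(X_1)=4H-2\sum_k E_k$ and using that $D_{i,j}$ meets only $E_i$ and $E_j$, each at one point transversally, one finds $c_1(X_1).D_{i,j}=4-2-2=0$. This is even, violating condition~(i) of Theorem~\ref{Theorem1}, and also exceeds $2g_{i,j}-2=-2$, so Theorem~\ref{Theorem2}(2) is equally unavailable. On the positive side, adjunction gives $\deg N_{D_{i,j}/X_1}=-2+c_1(X_1).D_{i,j}=-2$, and Grothendieck's theorem then forces $N_{D_{i,j}/X_1}\cong\mathcal{O}(-1)\oplus\mathcal{O}(-1)$, so each exceptional divisor $F_{i,j}$ of $\pi_2$ is a copy of $\mathbb{P}^1\times\mathbb{P}^1$, giving a clean model to work with.

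To verify Property~A directly, let $\zeta\in H^{1,1}(X,\mathbb{R})$ be nef with $\zeta^2=0$, $\zeta.c_1(X)^2\geq 0$ and $\zeta.c_2(X)\leq 0$. Expand $\zeta=aH+\sum_i b_iE_i+\sum_{i<j}c_{i,j}F_{i,j}$ in the natural basis, compute the triple intersection numbers, and express $c_1(X)$ and $c_2(X)$ in the same basis; the three hypotheses then become explicit polynomial (in)equalities in $(a,b_i,c_{i,j})$. The no-$4$-coplanar hypothesis now supplies a family of distinguished positive test classes: for each triple $\{i,j,k\}$ the plane $P_{ijk}\subset\mathbb{P}^3$ through $p_i,p_j,p_k$ contains no fourth blown-up point, so its strict transform $\widetilde{P}_{ijk}\subset X$ is a uniformly described del~Pezzo surface of degree~$6$ (i.e.\ $\mathbb{P}^2$ blown up at three non-collinear points), and it contains the three curves $D_{i,j},D_{i,k},D_{j,k}$. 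Pairing $\zeta$ with the $\widetilde{P}_{ijk}$, with the two rulings of each $F_{i,j}\cong\mathbb{P}^1\times\mathbb{P}^1$, and with lines in the strict transforms of the $E_i$, and combining these nefness inequalities with the three scalar identities, should pin down $(a,b_i,c_{i,j})$ up to a common scalar, placing $\zeta$ in $\mathbb{R}.H^2(X,\mathbb{Q})$.

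The main obstacle is this final combinatorial step. One has $1+n+\binom{n}{2}$ unknown coefficients but only three scalar identities from Property~A; the point is that the no-$4$-coplanar hypothesis produces $\binom{n}{3}$ uniform positive test classes $\widetilde{P}_{ijk}$, and this over-determination in the nefness inequalities is what should force rationality. Without the no-$4$-coplanar hypothesis, a fourth coplanar point would collapse one of the $\widetilde{P}_{ijk}$ to a degenerate configuration and remove the key test divisor, so the argument would fail in exactly the expected place.
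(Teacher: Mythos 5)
Your setup is sound as far as it goes: the reduction to showing $\lambda_1(f)=1$, the computation $c_1(X_1).D_{i,j}=0$ (so Theorem~\ref{Theorem1} is indeed unavailable), the identification $N_{D_{i,j}/X_1}\cong\mathcal{O}(-1)\oplus\mathcal{O}(-1)$, and the plan of combining the scalar identities $\zeta.c_1(X)^2=0$, $\zeta.c_2(X)=0$ with nefness inequalities against effective curves all match the skeleton of the paper's argument. But the proof stops exactly where the actual content begins. The phrase ``should pin down $(a,b_i,c_{i,j})$ up to a common scalar'' is the entire theorem; nothing in the proposal demonstrates that your chosen test objects produce inequalities strong enough to close the system. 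Moreover, the test objects themselves are problematic. Pairing a nef $(1,1)$-class $\zeta$ with a \emph{surface} $\widetilde{P}_{ijk}$ does not yield a scalar inequality (the product $\zeta.\widetilde{P}_{ijk}$ is a curve class, and $\zeta^2.\widetilde{P}_{ijk}=0$ is vacuous since $\zeta^2=0$); you would have to pair $\zeta$ with effective curves inside those surfaces, e.g.\ conics through $p_i,p_j,p_k$. If one does that and sums over all triples, one obtains roughly $\tfrac{2n}{3}\deg(u)\geq\sum_l\beta_l$, which must be played against the identity $\sum_l\beta_l=(\tfrac{6}{n-1}+\tfrac{n}{2})\deg(u)$ coming from $\zeta.c_2(X)=0$; this forces $\deg(u)=0$ only when $n(n-1)<36$, so the plane-based curves fail for $n\geq 7$.

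The paper's proof supplies precisely the missing ingredients. From $\zeta.c_2(X)=0$ it derives $(6+\tfrac{n(n-1)}{2})\deg(u)=(n-1)\sum_l\beta_l$, and from $\zeta.c_1(X)^2=0$ it derives $\tfrac{11}{2}\deg(u)\geq\sum_l\beta_l$, which already kills the case $n\geq 10$. For $6\leq n\leq 9$ the decisive test curves are not plane sections but the strict transforms of the \emph{twisted cubics} (rational normal curves of degree $3$) through each $6$-element subset of the points, giving $3\deg(u)-\sum_{l=1}^{6}\beta_{i_l}\geq 0$ and, after summing, $\tfrac{n}{2}\deg(u)\geq\sum_l\beta_l$, which beats the equality for every $n$; lower-degree rational curves handle $n\leq 5$. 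So the gap is concrete: you have not produced, for general $n$, a family of effective curves whose nefness inequalities are strong enough, and the family you do propose is insufficient beyond small $n$. The no-$4$-coplanar hypothesis is used in the paper to guarantee the $D_{i,j}$ are pairwise disjoint smooth curves (so the second blowup is as described), not primarily to furnish the planes $P_{ijk}$ as test divisors.
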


\begin{remark}  
1) Igor Dolgachev informed us that in the special case of Theorem \ref{Theorem3} when $n=4$ or $5$, and $D_j$ are lines in $\mathbb{P}^3$, then the automorphism group of $X$ can be explicitly determined by methods which are different from ours.  

2) The proof of Theorem \ref{Theorem3} shows that the conclusion is still valid in the following more general setting. Let $\pi _1:X_1\rightarrow X_0=\mathbb{P}^3$ be the blowup at $n$ points $p_1,\ldots ,p_n$. Let $E_1,\ldots ,E_n$ be the exceptional divisors. Let $D_1,\ldots ,D_m\subset X_1$ be pairwise disjoint smooth curves. Let $X=X_2$ be the blowup of $X_1$ at $D_1,\ldots ,D_m$. We define
\begin{eqnarray*}
\gamma :=\sum _j\deg (\pi _1)_*(D_j).
\end{eqnarray*} 
Assume that there is $\lambda >0$ such that for any $l$:
\begin{eqnarray*}
\sum _{j}E_l.D_j\leq \lambda ,
\end{eqnarray*}
and moreover
\begin{eqnarray*}
\frac{6+\gamma }{\lambda}>\frac{11}{2}.
\end{eqnarray*}
Moreover, assume that for any $j$ 
\begin{eqnarray*}
(\frac{1}{2}+\frac{1}{\lambda})c_1(X_1).D_j\geq \frac{g_j-1}{2},
\end{eqnarray*}
where $g_j$ is the genus of $D_j$.
\end{remark}

Finally, we give some explicit examples.

{\bf Example 3}. We let $C_1,\ldots ,C_m\subset \mathbb{P}^3$ be pairwise disjoint smooth curves. Let $\pi _1:X_1\rightarrow \mathbb{P}^3$ be the blowup of $\mathbb{P}^3$ at these curves. Now let $D$ be a smooth rational curve on $X_1$. Assume that one of the following properties are satisfied: 

either

i) $D$ is a fiber of  an exceptional divisor in $X_1$, 

or 

ii) $\pi _1(D)$ intersects $\bigcup C_j$, counted with multiplicities, at an odd number of points. 

Let $X_2$ be the blowup of $X_1$ at $D$. Then $X_2$ satisfies Property A. We can iterate this procedure to obtain many more examples.

\begin{proof}[Proof of Example 3]
It follows from  the proof of Theorem 2 in \cite{truong} that $X_1$ satisfies Property A. 

Since $D$ is a smooth rational curve, condition 2) in Theorem \ref{Theorem1} is satisfied by Grothendieck's theorem. 

Hence, it suffices to check condition i) in Theorem \ref{Theorem1}. 

If $D$ is a fiber of an exceptional divisor $F$ in $X_1$, then $F.D=-1$ since $D$ is a fiber of $F$. Then, from 
\begin{eqnarray*}
c_1(X_1).D=2-2g+F.D=1,
\end{eqnarray*}
we have that $c_1(X_1).D$ is an odd number. 

Let $m$ be the number of point intersections of $\pi _1(D)$ and $\bigcup C_j$. Then, since $D$ is the strict transform of $\pi _1(D)$, we find that
\begin{eqnarray*}
c_1(X_1).D=c_1(\mathbb{P}^3).\pi _1(D)-m=4\deg (\pi _1(D))-m, 
\end{eqnarray*}
which is an odd number since $m$ is an odd number. 

Hence, in both cases Theorem \ref{Theorem1} applies.  

\end{proof}

{\bf Acknowledgments.} We would like to thank Igor Dolgachev for inspiring discussions and for his generous help on the topic.

\section{Proofs of the main results}

\begin{proof}[Proof of Theorem \ref{Theorem1}]
Let $\zeta$ be a nef class in $X$ such that both $\zeta .\zeta =0$, $\zeta .c_1(X)^2\geq 0$ and $\zeta .c_2(X)\leq 0$. We need to show that $\zeta \in \mathbb{R}.H^2(X,\mathbb{Q})$. 

Let $F$ be the exceptional divisor of the blowup $\pi$. We can write $\zeta =\pi ^*(\xi )-\alpha F$, for some $\xi \in H^{1,1}(Y,\mathbb{R})$ and $\alpha \geq 0$. We have several separate cases:

1) $\pi$ is a blowup at a point. In this case $c_1(X)=\pi ^*c_1(Y)-2F$ and $c_2(X)=\pi ^*c_2(Y)$ (see Chapter 4 in \cite{griffiths-harris}). We have 
\begin{eqnarray*}
\zeta ^2&=&(\pi ^*(\xi )^2-\alpha F)^2=\pi ^*(\xi ^2)+\alpha ^2F^2,\\
\zeta .c_1(X)^2&=&(\pi ^*(\xi )-\alpha F).(\pi ^*c_1(Y)-2F)^2=\pi ^*(\xi .c_1(Y)^2)-4\alpha .
\end{eqnarray*}
Here we use that $\pi _*(F)=\pi _*(F^2)=0$, and $F^3=1$. From $\zeta ^2=0$, it follows that $\alpha =0$. Thus $\zeta =\pi ^*(\xi )$, which implies that $\xi $ is nef and $\xi ^2=0$. Then the conditions $\zeta .c_1(X)^2\geq 0$ and $\zeta .c_2(X)\leq 0$ become (here $\alpha =0$) $\xi .c_1(Y)^2 \geq 0$ and $\zeta .c_2(Y)\leq 0$. Since $Y$ satisfies Property A, we have that $\xi\in \mathbb{R}.H^2(Y,\mathbb{Q})$. Consequently, $\zeta =\pi ^*(\xi )\in \mathbb{R}.H^2(X,\mathbb{Q})$.

2) $\pi $ is the blowup at a smooth curve $C$. In this case $c_1(X)=\pi ^*c_1(Y)-F$ and $c_2(X)=\pi ^*c_2(Y)+\pi ^*(C)-\pi ^*c_1(Y).F$ (see Chapter 4 in \cite{griffiths-harris}). We have two subcases:

{\bf Subcase 2.1}: $\alpha =0$. In this case $\zeta =\pi ^*(\xi )$. Since $\zeta$ is nef and $\zeta ^2=0$,  we have $\xi$ is nef and $\xi ^2=0$.  We have
 \begin{eqnarray*}
 0\leq \zeta .c_1(X)^2&=&\pi ^*(\xi )(\pi ^*c_1(Y)-F).(\pi ^*c_1(Y)-F)=\pi ^*(\xi .c_1(Y)^2)+\pi ^*(\xi ).F^2\\
 &=&\xi .c_1(Y)^2-\xi .C.
 \end{eqnarray*} 
Here we use that $\pi _*(F)=0$ and $\pi _*(F^2)=-C$ (see for example Lemma 4 in \cite{truong}). Hence, 
$$\xi .c_1(Y)^2\geq \xi .C\geq 0,$$ 
the last inequality follows from the fact that $\xi $ is nef and $C$ is an effective curve. 

Similarly, we have
\begin{eqnarray*}
0\geq \zeta .c_2(X)=\pi ^*(\xi ). (\pi ^*c_2(Y)+\pi ^*C-\pi ^*c_1(Y).F)=\xi .c_2(Y)+\xi .C.
\end{eqnarray*}
Hence, $\xi .c_2(Y)\leq -\xi .C\leq 0$. 

Since $Y$ satisfies Property A, it follows that $\xi =0$. Consequently $\zeta =\pi ^*(\xi )=0$. 

{\bf Subcase 2.2}: $\alpha >0$. In this case we will obtain a contradiction. 

We use the idea in part  e) of the proof of Theorem 2 in \cite{truong}.  We have a SES of vector bundles over $C$:
\begin{eqnarray*}
0\rightarrow T_C\rightarrow T_Y|_C\rightarrow N_{C/Y}\rightarrow 0.
\end{eqnarray*}

From this, it follows that 
\begin{eqnarray*}
c_1(N_{C/Y})=c_1(Y).C+2g-2=\gamma .
\end{eqnarray*}

Recall that $F$ is the exceptional divisor of the blowup $\pi$. Then $F=\mathbb{P}(N_{C/X})\rightarrow C$ is a ruled surface over $C$. Hence, (see Proposition 2.8 in Chapter 5 in \cite{hartshorne}) there is a line bundle $\mathcal{M}$ over $C$ such that $\mathcal{E} =N_{C/Y}\otimes \mathcal{M}$ is normalized, in the sense that $H^0(\mathcal{E} )\not= 0$, but for every line bundle $\mathcal{L}$ with $c_1(\mathcal{L})<0$ then $H^0(\mathcal{E}\otimes \mathcal{L})=0$. 

Let $f$ be a fiber of the fibration $F\rightarrow C$. Then, (see Proposition 2.9 in Chapter 5 in \cite{hartshorne}), there is  a so-called zero section $C_0\subset F$ with the following properties: 
\begin{eqnarray*}
\tau :=C_0.C_0&=&c_1(\mathcal{E}),\\
C_0.f&=&1.
\end{eqnarray*}

Because $N_{C/Y}$ is decomposable, $\mathcal{E}$ is also decomposable. By part a) of Theorem 2.12 in Section 5 in \cite{hartshorne}, $c_1(\mathcal{E})\leq 0$. Moreover, from
\begin{eqnarray*}
c_1(\mathcal{E})=c_1(\mathcal{N_{C/Y}})+2c_1(\mathcal{M})=c_1(Y).C+2g-2+2c_1(\mathcal{M}),
\end{eqnarray*}
and the assumption that $c_1(Y).C$ is an odd number, we get that $c_1(\mathcal{E})<0$. Hence $\tau <0$.

It can be shown (see for example b) of Lemma 4 in \cite{truong}), that 
\begin{eqnarray*}
C_0=-F.F+\frac{1}{2}(\tau +\gamma )f.
\end{eqnarray*}

By the results in  \cite{truong} (for example d) of the proof of Theorem 2) therein), from $\alpha >0$ and $\zeta ^2=0$ we have
\begin{eqnarray*}
\xi .C=\frac{1}{2}\alpha \gamma .
\end{eqnarray*}

Now we obtain the desired contradiction. Since $\zeta $ is nef and $C_0$ is an effective curve, we have $\zeta .C_0\geq 0$. Hence,
\begin{eqnarray*}
 0&\leq& (\pi ^*(\xi )-\alpha F).(-F.F+\frac{1}{2}(\tau +\gamma )f\\
 &=&\xi .\pi _*(-F.F)+\alpha F.F.F-\frac{1}{2}\alpha (\tau +\gamma )F.f\\
 &=&\xi .C-\alpha \gamma+\frac{1}{2}\alpha (\tau +\gamma )=\frac{\alpha \tau }{2}<0.
\end{eqnarray*}
In the above we used that $\pi _*(-F.F)=\pi _*(C_0)=C$ (see for example Lemma 4 in \cite{truong}), $F.f=-1$, $F.F.F=-\gamma$, $\xi .C=\alpha \gamma /2$ , $\alpha >0$ and $\tau =C_0.C_0< 0$.
\end{proof}

\begin{proof}[Proof of Theorem \ref{Theorem2}]
1) Let $\zeta $ be a nef class on $X_2$ such that $\zeta ^2=0$, $\zeta .c_1(X)=0$ and $\zeta .c_1(X_2)^2\leq 0$. We need to show that $\zeta \in \mathbb{R}.H^2_{alg}(X_2,\mathbb{Q})$. More strongly, we will show that $\zeta$ must be $0$.

Let us denote by $F_j$ the exceptional divisor over $D_j$ of the blowup $\pi _2:X_2\rightarrow X_1$.  We denote by $\pi _1:X_1\rightarrow X_0$ the blowup of $C_0$ at the points $p_i$.

We can write $\zeta =\pi _2^*(\xi )-\sum _{j}\alpha _jF_j$, where $\alpha _j\geq 0$ and $\xi$ is a movable class on $X_1$. Since $D_j$ are disjoint, by intersecting the equations $\zeta ^2=\zeta .c_1(X_2)=0$ with $F_j$, we find as in \cite{truong} that either $\alpha _j=0$ or 
\begin{eqnarray*}
\xi .D_j=\alpha _jc_1(X_1).D_j=\alpha _j(2g_j-2).
\end{eqnarray*}
If $\alpha _j=0$ then $$\xi .D_j=\zeta .D_j'\geq 0=\alpha _jc_1(X_1).D_j,$$ where $D_j'\subset F_j$ is a section whose pushforward is $D_j$. If $\alpha _j\not= 0$ then $\xi .D_j=c_1(X_1).D_j$. Therefore,
\begin{eqnarray*}
0\geq \zeta .c_2(X_2)&=&(\pi _2^*(\xi )-\sum _j\alpha _jF_j).(\pi _2^*c_2(X_1)+\sum _j(\pi _2^*D_j-\pi _2^*c_1(X_j).F_j))\\
&=&\xi .c_2(X_1)+\sum _j(\xi .D_j-\alpha _jc_1(X_1).D_j).
\end{eqnarray*}
 Since each term $\xi .D_j-\alpha _jc_1(X_1).D_j$ is non-negative, we find that $\xi .c_2(X_1)\leq 0$. Because $c_2(X_1)=\pi _1^*c_2(X_0)$, we then get that $(\pi _1)_*(\xi ).c_2(X_0)\leq 0$. Because $(\pi _1)_*(\xi )$  is movable in $X_0$, from the assumption on $c_2(X_0)$ we obtain $(\pi _1)_*(\xi )=0$. From this, it easy follows that $\xi$ and then $\zeta $ are $0$.    
 
2) The proof is similar to that of 1). The difference is now that here for each $j$, either $\alpha _j=0$ or $$\xi .D_j-\alpha _jc_1(X_1).D_j=\frac{\alpha _j}{2}[(2g_j-2)-c_1(X_1).D_j].$$
In the first case
\begin{eqnarray*}
\xi .D_j-\alpha _jc_1(X_1).D_j=\xi .D_j=\zeta .D_j'\geq 0,
\end{eqnarray*}
where $D_j'\subset F_j$ is a section. In the second case, by the assumption $(2g_j-2)-c_1(X_1).D_j\geq 0$, we also have $\xi .D_j-\alpha _jc_1(X_1).D_j\geq 0$. 

Hence,
\begin{eqnarray*}
0\geq -\sum _j(\xi .D_j-\alpha _jc_1(X_1).D_j)\geq \xi .c_2(X_1).
\end{eqnarray*}
Then we can proceed as before. 
 \end{proof}
 \begin{proof}[Proof of Theorem \ref{Theorem3}]
1) For the proof, it suffices to show that for any non-zero nef $\zeta$ on $X$ then either $\zeta .c_1(X)^2\not= 0$ or $\zeta .c_2(X)\not= 0$.

We let $E_1,\ldots ,E_n$ be the exceptional divisors of the blowup $\pi _1:X_1\rightarrow X_0=\mathbb{P}^3$. Let  $F_{i,j}$ be the exceptional divisors of the blowup $\pi _2:X=X_2\rightarrow X_1$. Then we can write 
\begin{eqnarray*}
\zeta &=&\pi _2^*(\xi )-\sum _{i<j}\alpha _{i,j}F_{i,j},\\
\xi &=&\pi _1^*(u)-\sum _{l}\beta _lE_l.
\end{eqnarray*}
Here $u$ is nef on $\mathbb{P}^3$ and $\alpha _{i,j},\beta _l\geq 0$. 

For the proof of 1), it then suffices to show that $\deg (u)=0$.  From 
\begin{eqnarray*}
c_2(X)=\pi _2^*c_2(X_1)+\sum _{i<j}\pi _2^*D_{i,j}-\sum _{i<j}\pi _2^*c_1(X_1).F_{i,j},
\end{eqnarray*}
and the fact that $c_1(X_1).D_{i,j}=0$, the condition $\zeta .c_2(X)=0$ becomes
$\xi .c_2(X_1)+\sum _{i<j}\xi .D_{i,j}=0$. Since $c_2(X_1)=\pi _1^*(c_2(\mathbb{P}^3))$, it follows that $\xi .c_2(X_1)=16 \deg (u)$. We also have that $\xi .D_{i,j}=\deg (u)-\beta _i-\beta _j$ for every $i<j$. Therefore, we obtain
\begin{eqnarray*}
6\deg (u)&=&-\sum _{i<j}\xi .D_{i,j},\\
(6+\frac{n(n-1)}{2})\deg (u)&=&(n-1)\sum _l\beta _l.
\end{eqnarray*}

From the condition $\zeta .c_1(X)^2=0$, we obtain
\begin{eqnarray*}
0&=&\zeta .c_1(X)^2=(\pi _2^*(\xi )-\sum _{i<j}\alpha _{i,j}F_{i,j}).(\pi _2^*c_1(X_1)^2-2\sum _{i<j}\pi _2^*c_1(X_1).F_{i,j}+\sum _{i<j}F_{i,j}^2)\\
&=&\xi . c_1(X_1)^2-\sum _{i<j}\xi .D_{i,j}-2\sum _{i<j}\alpha _{i,j}c_1(X_1).D_{i,j}+\sum _{i<j}\alpha _{i,j}(c_1(X_1).D_{i,j}+2g_{i,j}-2)\\
&=&22 \deg (u)-4\sum _l\beta _l+\sum _{i<j}\alpha _{i,j}(2g_{i,j}-2-c_1(X_1).D_{i,j})\\
&=&22\deg (u)-4\sum _{l}\beta _l-2\sum _{i<j}\alpha _{i,j}.
\end{eqnarray*}
In the above, $g_{i,j}=0$ is the genus of $C_{i,j}$, and $c_1(X_1).D_{i,j}=0$ for all $i<j$. In particular, we obtain
\begin{equation}
\frac{11}{2}\deg (u)\geq \sum _{l}\beta _l=(\frac{6}{n-1}+\frac{n}{2})\deg (u).
\label{Equation3}\end{equation}
From the above inequality, we will finish showing that $\deg (u)=0$. We consider several cases:

Case 1: $n\geq 10$. From Equation (\ref{Equation3}), it follows immediately that $\deg (u)=0$ as wanted. 

Case 2: $6\leq n\leq 9$. In this case, for each $6$ points $p_{i_1},\ldots ,p_{i_6}$ among $n$ points $p_1,\ldots ,p_n$, there is a unique rational normal curve $C\subset \mathbb{P}^3$ of degree $3$ passing through the $6$ chosen points. Let $D\subset X_1$ be the  strict transform of $C$. Then $D$ is different from the curves $D_{i,j}$. Therefore $\pi _2^*D$ is an effective curve, and hence $$3\deg (u)-\sum _{l=1}^6\beta _{i_l}\geq \xi .D=\zeta .\pi _2^*(D)\geq 0.$$  Summing over all such choices of $p_{i_1},\ldots ,p_{i_n}$ we find that
\begin{eqnarray*}
\frac{n}{2}\deg (u)\geq \sum _{l}\beta _l.
\end{eqnarray*}  
Combining this with $$\sum _l\beta _l=(\frac{6}{n-1}+\frac{n}{2})\deg (u),$$
we obtain $\deg (u)=0$.

Case 3: $n=4,5$. In this case, we use rational normal curves to obtain
\begin{eqnarray*}
\frac{n}{3}\deg (u)\geq \sum _l\beta _l.
\end{eqnarray*}
Combining this with 
\begin{eqnarray*}
\sum _{l}\beta _l=(\frac{6}{n-1}+\frac{n}{2})\deg (u),
\end{eqnarray*}
we obtain $\deg (u)=0$.

Case 4: $n=1,2,3$. In this case we have $n\deg (u)\geq \sum _l\beta _l$. Combining this with 
\begin{eqnarray*}
\sum _{l}\beta _l=(\frac{6}{n-1}+\frac{n}{2})\deg (u),
\end{eqnarray*}
we obtain $\deg (u)=0$.
  \end{proof}

\end{document}